\documentclass[11pt]{article}
\usepackage[margin=1in]{geometry}
\usepackage{amsthm,amsmath}
\usepackage{latexsym,amssymb,amsmath}

\usepackage{tikz}
\usetikzlibrary{positioning,fit,backgrounds,calc}
\usepackage{xcolor}

\usepackage{graphicx,float,color,fancybox,shapepar,setspace,hyperref}
\usepackage{subfigure}
\usepackage{pgf,tikz}
\usepackage[affil-it]{authblk}
\usepackage{indentfirst}
\usepackage[section]{placeins}
\hypersetup
{
colorlinks=true,
linkcolor=blue,
filecolor=blue,
urlcolor=blue,
citecolor=cyan,
}
\usetikzlibrary{arrows}
\newtheorem{theorem}{Theorem}
\newtheorem{lemma}{Lemma}
\newtheorem{corollary}{Corollary}

\newtheorem{conjecture}{Conjecture}

\newtheorem{definition}{Definition}

\newcounter{remarkcounter}

\def\dfn#1{{\sl #1}}

\def\~{\sim}

\def\less{\setminus}

\newcommand{\F}{\mathbb F}

\newcommand{\Rea}{\operatorname{Re}}

\begin{document}
\title{Odd covers for complete graphs and complete 3-graphs}
\author{Ting HUANG, Jiabao YANG, Yaojun CHEN\footnote{Corresponding author. Email: yaojunc@nju.edu.cn}\\
 \small{School of Mathematics, Nanjing University, Nanjing 210093, P.R. CHINA}}
\date{}
\maketitle

\begin{quote}
\noindent {\bf Abstract:}
The Graham-Pollak theorem says that one needs at least $n - 1$ complete bipartite graphs to cover each edge of a complete graph $K_{n}$ on $n$ vertices exactly once. The odd cover problem is a parity analogue which seeks the minimum number of complete bipartite graphs, denoted by $b_2(n)$, such that each edge of $ K_n $ is covered an odd number of times.
An odd cover of a complte 3-graph $K_n^{(3)}$ on $n$ vertices is a family of complete $3$-partite $3$-graphs such that every triple is covered an odd number of times.  Let $b_3(n)$ be the minimum size of such a family. The values of $b_2(n)$ and $b_3(n)$ are determined for some $n$ in several previous works. In this paper, we first determine the value of $b_2(n)$ for all $n$, which confirms a conjecture due to Buchanan et al. (JGT, 2026), and then show $b_3(n+1)=b_2(n)$ by which the value of $b_3(n)$ is determined for all $n$, that resolves a question posed by Leader and Tan (EJC, 2026).

\noindent {\bf Keywords:} Odd cover, Biclique, Complete graph, Complete 3-graph

\noindent {\bf 2020 MSC:} 05C70
\end{quote}

\section{Introduction}
Throughout this paper, we call complete bipartite graph by biclique. Let $K_n$ be the complete graph on $n$ vertices.
A classical theorem of Graham and Pollak~\cite{GrahamPollak1971,GrahamPollak1972} states that one needs at least $n - 1$ bicliques to cover every edge of $K_{n}$ exactly once. Given a finite simple graph $G$, let $\mathrm{bp}(G)$ denote the minimum number of bicliques whose edge sets partition $E(G)$. Thus Graham-Pollak states that $\mathrm{bp}(K_n) = n - 1$.

Babai and Frankl posed the following question in \cite{BabaiFrankl1992}: What is the minimum number of bicliques needed to cover every edge of $K_{n}$ an odd number of times? They called this the odd cover problem, generalized in \cite{beaudrap} as follows. Let $G$ be a finite simple graph. An \emph{odd cover} of $G$ is a collection of bicliques on subsets of the vertex set $V(G)$ which cover each edge of $G$ an odd number of times and each nonedge of $G$ an even number of times. 
An odd cover always exists for any graph $G$ since one trivial example is the collection of bicliques $\bigl(\{u\},\{v\}\bigr)$ defined on each edge $uv$ of $G$. In fact, a collection of bicliques is an odd cover of $G$ if and only if taking the symmetric difference over their edge sets recovers $E(G)$.
 Hence, it is worthwhile to determine the least number of bicliques forming an odd cover of $G$. Let $b_{2}(G)$ denote the minimum cardinality of such an odd cover of $G$. Because $K_n$ has no non-edges, so $b_2(K_n)$ is the smallest number of bicliques needed to cover every edge of $K_n$  an odd number of times. For convenience, we write $b_2(n) = b_2(K_n)$. In this language, Babai and Frankl~\cite{BabaiFrankl1992} asked for the value of $b_{2}(n)$.

The following figure gives one small example. Here $\triangle$ denotes symmetric difference of edge sets.

\begin{figure}[htbp]
\centering
\setlength{\unitlength}{0.28mm}
\begin{picture}(410,70)(0,-8)
\linethickness{0.55pt}

\qbezier(30,52)(19,41)(8,30)
\qbezier(30,52)(41,41)(52,30)
\qbezier(30,52)(24,28)(18,4)
\qbezier(30,52)(36,28)(42,4)
\qbezier(8,30)(30,30)(52,30)
\qbezier(8,30)(13,17)(18,4)
\qbezier(8,30)(25,17)(42,4)
\qbezier(52,30)(35,17)(18,4)
\qbezier(52,30)(47,17)(42,4)
\qbezier(18,4)(30,4)(42,4)
\put(30,52){\circle*{4}}
\put(8,30){\circle*{4}}
\put(52,30){\circle*{4}}
\put(18,4){\circle*{4}}
\put(42,4){\circle*{4}}

\put(92,27){\makebox(0,0){\Large $=$}}

\qbezier(135,52)(146,41)(157,30)
\qbezier(135,52)(129,28)(123,4)
\qbezier(113,30)(135,30)(157,30)
\qbezier(113,30)(118,17)(123,4)
\put(135,52){\circle*{4}}
\put(113,30){\circle*{4}}
\put(157,30){\circle*{4}}
\put(123,4){\circle*{4}}
\put(147,4){\circle*{4}}

\put(197,27){\makebox(0,0){\large $\triangle$}}

\qbezier(240,52)(229,41)(218,30)
\qbezier(240,52)(246,28)(252,4)
\qbezier(262,30)(240,30)(218,30)
\qbezier(262,30)(257,17)(252,4)
\put(240,52){\circle*{4}}
\put(218,30){\circle*{4}}
\put(262,30){\circle*{4}}
\put(228,4){\circle*{4}}
\put(252,4){\circle*{4}}

\put(302,27){\makebox(0,0){\large $\triangle$}}

\qbezier(323,30)(345,30)(367,30)
\qbezier(323,30)(340,17)(357,4)
\qbezier(333,4)(350,17)(367,30)
\qbezier(333,4)(345,4)(357,4)
\put(345,52){\circle*{4}}
\put(323,30){\circle*{4}}
\put(367,30){\circle*{4}}
\put(333,4){\circle*{4}}
\put(357,4){\circle*{4}}

\end{picture}
\caption{An odd cover of $K_5$. }
\label{fig:oddcoverK5}
\end{figure}

The problem of determining $b_2(G)$ was put forward by Niel de Beaudrap \cite{beaudrap}. Prior work contains alternate formulations equivalent to odd covers. The paper \cite{kaminski} introduced a procedure called \textit{bipartite subgraph complementation}, which complements the edges and nonedges between two disjoint subsets of vertices of the graph. Using this vocabulary, minimizing the cardinality of an odd cover of an $n$-vertex graph $G$ reduces to finding the least number of bipartite subgraph complementations that turn the $n$-vertex empty graph into $G$. This ties into the topic examined in \cite{bu1}: determining how many subgraph complementations, the operation of complementing the edge set of an induced subgraph, are necessary to generate $G$ from an $n$-vertex empty graph.

Buchanan et al. provided a general lower bound for $b_2(G)$
 in terms of $\mathbb{F}_2$-rank $r_2(A_G)$, where $A_G$ is the adjacency matrix of $G$ over $\F_2$.
\begin{theorem}[Buchanan et al.~\cite{BuchananCliftonCulverONeillRombachYin2023}]\label{thm:lower}
 For every graph $G$, $b_2(G)\ge r_2(A_G)/2$.
\end{theorem}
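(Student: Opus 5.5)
The plan is to write the adjacency matrix over $\F_2$ as a sum of low-rank matrices coming from the bicliques, and then use subadditivity of rank. Let $\{(X_i,Y_i)\}_{i=1}^{b}$ be an odd cover of $G$ with $b=b_2(G)$, where $X_i,Y_i\subseteq V(G)$ are disjoint. Let $x_i,y_i\in\F_2^{V(G)}$ be the characteristic vectors of $X_i$ and $Y_i$.

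First compute the adjacency matrix of a single biclique $(X_i,Y_i)$. An entry $(u,v)$ equals $1$ exactly when $u\in X_i,v\in Y_i$ or $u\in Y_i,v\in X_i$. Since $X_i\cap Y_i=\emptyset$, these two events are disjoint, so over $\F_2$ the matrix is
\[
B_i \;=\; x_i y_i^{T} + y_i x_i^{T}.
\]
The diagonal of $B_i$ is zero, because $x_i(u)y_i(u)=0$ for every $u$.

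Next, use the defining property of an odd cover. The entry $(u,v)$ of $\sum_i B_i$ counts, modulo $2$, the bicliques containing the edge $uv$. This is $1$ if $uv\in E(G)$ and $0$ otherwise. Both sides also vanish on the diagonal, so
\[
A_G \;=\; \sum_{i=1}^{b} \bigl(x_i y_i^{T} + y_i x_i^{T}\bigr) \quad\text{over } \F_2 .
\]

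Finally, each $B_i$ is a sum of two rank-at-most-one matrices, so $r_2(B_i)\le 2$. Subadditivity of rank then gives
\[
r_2(A_G)\;\le\;\sum_{i=1}^{b} r_2(B_i)\;\le\; 2b,
\]
that is, $b_2(G)\ge r_2(A_G)/2$.

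The only step needing care is the matrix identity: one has to check that disjointness of $X_i$ and $Y_i$ makes the two rank-one terms cover each edge of the biclique once, with no diagonal contributions. Everything else is routine linear algebra over $\F_2$.
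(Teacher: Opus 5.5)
Your proof is correct: the $\F_2$-identity $A_G=\sum_{i}(x_iy_i^{T}+y_ix_i^{T})$ and subadditivity of rank give $r_2(A_G)\le 2b_2(G)$. The paper does not prove this theorem (it is quoted from Buchanan et al.), and your argument is the standard one.

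The same structure appears implicitly in the proof of Theorem~\ref{thm:obstruction}. There the vectors $r_v$ form the rows of an $n\times 2b$ matrix $R$ whose columns are your $x_i,y_i$. The form $B$ then gives $A_G=R\,\Omega\,R^{T}$, where $\Omega$ is block-diagonal with $b$ copies of $\begin{pmatrix}0&1\\1&0\end{pmatrix}$, and this yields your rank bound in one step.
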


Finite-field constructions giving equality in some even cases go back to Radhakrishnan, Sen and Vishwanathan~\cite{RadhakrishnanSenVishwanathan2000}.  The following result gives the general range of possible values.

\begin{theorem}[Buchanan et al.~\cite{BuchananCliftonCulverONeillRombachYin2023}]\label{thm:range}
For every $n\ge2$,
\[
        \left\lceil \frac{n}{2}\right\rceil \le b_2(n)\le 
        \left\lceil \frac{n}{2}\right\rceil+1.
\]
Moreover, $b_2(n)=\lceil n/2\rceil$ whenever $n\equiv0,1,7\pmod8$.
\end{theorem}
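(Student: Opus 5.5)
Three things have to be proved: the lower bound $b_2(n)\ge\lceil n/2\rceil$, the upper bound $b_2(n)\le\lceil n/2\rceil+1$, and equality with the lower bound when $n\equiv0,1,7\pmod 8$.

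\textbf{Lower bound.} I would apply Theorem~\ref{thm:lower} to $G=K_n$. Over $\F_2$ the adjacency matrix is $A=J-I$, and its rank is computed as follows.
\begin{itemize}
\item For $n$ even, $A^2=J^2-2J+I=nJ+I\equiv I$, so $A$ is invertible and $r_2(A)=n$.
\item For $n$ odd, $A$ is singular because $A\mathbf 1=(n-1)\mathbf 1=0$. Deleting a row and column leaves the even-order matrix $J-I$, which is invertible, so $r_2(A)=n-1$.
\end{itemize}
In both cases $b_2(n)\ge r_2/2$ gives $b_2(n)\ge \lceil n/2\rceil$ after rounding up, since $b_2$ is an integer and $\lceil (n-1)/2\rceil=\lceil n/2\rceil$ is false for odd $n$. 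So the odd case needs a small separate argument. Write an odd cover as $A=\sum_{i=1}^k (x_iy_i^T+y_ix_i^T)$ over $\F_2$ with $x_i,y_i$ disjointly supported. The right-hand side is a symmetric matrix with zero diagonal and rank at most $2k$, so even rank $n-1$ only gives $k\ge (n-1)/2$.

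To gain the extra one, I would adjoin a new vertex. Given a cover of $K_n$ by $k$ bicliques, I want a cover of a graph of even rank $n+1$ using at most $k$ bicliques. The natural candidate is to add the new vertex $v$ to one side of each biclique chosen so that $v$ becomes adjacent to everyone with odd parity. That works only if $\mathbf 1$ lies in a suitable span, so this route may fail.

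The alternative I would actually pursue is a degree/parity count. Each vertex of $K_n$ has even degree $n-1$. Suppose $k=(n-1)/2$, so that $x_1,y_1,\dots,x_k,y_k$ are $n-1$ vectors spanning the column space of $A$, which is $\mathbf 1^\perp$. Then $A=XMX^T$ with $M$ the block-antidiagonal symplectic form. Take $u$ with $Au=0$, namely $u=\mathbf 1$. Then $X^T\mathbf 1=0$ forces every $x_i$ and $y_i$ to have even weight. I would aim for a contradiction from even weights combined with disjoint supports; in particular, computing $\mathbf 1^T(\text{diag})$ parity gives the relation $\sum_i |x_i||y_i|=\binom n2$ mod 2. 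Here every $|x_i||y_i|$ is even, while $\binom n2$ is odd when $n\equiv 3\pmod 4$. This is only a partial contradiction, so the odd case may well be where the actual proof cites a sharper argument.

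\textbf{Upper bound and the cases $n\equiv0,1,7\pmod 8$.} The upper bound I would get by induction. Grouping vertices into pairs covers the edges inside the pairs, and then $K_{m}$-type recursion on blocks, combined with the fact that $b_2$ is subadditive under adding two vertices ($b_2(n+2)\le b_2(n)+1$, since one biclique $\{a\}\times(V\cup\{b\})$ plus folding $b$ into existing bicliques works), yields $\lceil n/2\rceil+c$. For equality when $n\equiv0,1,7\pmod 8$ I would give explicit constructions over $\F_2$: a symplectic decomposition $J-I=\sum(x_iy_i^T+y_ix_i^T)$ with disjoint supports, built from a base case on $8$ vertices and extended by blocks of $8$.

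\textbf{Main obstacle.} The crux is the explicit base constructions with disjoint supports.
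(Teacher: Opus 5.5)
The paper does not prove this statement; it is quoted from Buchanan et al. Your proposal therefore has to stand on its own, and it has two genuine gaps.

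\textbf{Odd lower bound.} Your rank-plus-edge-count argument gives $b_2(n)\ge (n+1)/2$ only for $n\equiv 3\pmod 4$, and you have no argument for $n\equiv 1\pmod 4$. Applying Theorem~\ref{thm:obstruction} to the restriction to $K_{n-1}$ would add $n\equiv 5\pmod 8$. That still leaves $n\equiv 1\pmod 8$, which is exactly the odd class where equality is asserted.

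No argument using only $q(r_v)=0$ and $B(r_u,r_v)=1$ (notation of that proof) can reach this class. Take $n=2k+1$ with $4\mid k$ and $Q(x)=\binom{|x|}{2}$ on $\F_2^n$. Then $Q$ vanishes on the radical $\langle\mathbf 1\rangle$ of its polar form, and its Gauss sum is $2^{k+1}>0$. Hence $Q=\sum_{i=1}^k\ell_i m_i$ for linear forms $\ell_i,m_i$. The vectors $r_v=(\ell_1(e_v),m_1(e_v),\dots,\ell_k(e_v),m_k(e_v))$ then satisfy both conditions.

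What you are missing is the coordinatewise disjointness $L_i\cap R_i=\emptyset$. Here is how to use it. Suppose $k$ bicliques oddly cover $K_{2k+1}$.
\begin{enumerate}
\item Any $2k$ of the $r_v$ have Gram matrix $J-I$ of even order, which is invertible, so the $r_v$ span $\F_2^{2k}$.
\item Since $B(\sum_v r_v,r_w)=2k=0$ for all $w$, we get $\sum_v r_v=0$. In particular $|L_1|$ is even.
\item Let $c=\sum_{v\in L_1}r_v$. Then for every $u$,
$B(c,r_u)=|L_1|-\ell_1(u)=\ell_1(u)=B(e_2,r_u)$,
where $e_2$ is the unit vector of the $R_1$-coordinate. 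This forces $c=e_2$.
\item But the $R_1$-coordinate of $c$ is $|L_1\cap R_1|=0$, a contradiction.
\end{enumerate}
This handles every odd $n\ge 3$ at once.

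\textbf{Upper bound and equality cases.} Your upper bound rests on $b_2(n+2)\le b_2(n)+1$, which is false. We have $b_2(2)=1$, while $b_2(4)\ge 3$ because $K_4$ has no perfect odd cover (Theorem~\ref{thm:obstruction} with $k=2$). The folding step fails because making $b$ oddly adjacent to all of $V$ inside the old bicliques needs a vector with no $(1,1)$ pair that pairs to $1$ with every $r_v$, and none may exist. For example, if $K_2$ is covered by $(\{u\},\{v\})$, putting $b$ on either side joins it to only one of $u,v$. You also end with an unspecified constant $c$.

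The valid moves are $b_2(n)\le b_2(n+1)\le b_2(n)+1$: restrict to a subset, or add the biclique $(\{z\},V)$. Starting from $b_2(8m)=4m$, these give:
\begin{itemize}
\item the upper bound for every $n\not\equiv 4\pmod 8$;
\item together with the odd lower bound, equality for $n\equiv 1,7\pmod 8$;
\item for $n\equiv 4\pmod 8$, only $n/2+2$, so that class needs its own construction.
\end{itemize}
So the whole upper side reduces to two constructions you never give: perfect odd covers of $K_{8m}$, and odd covers of $K_{8m+4}$ by $4m+3$ bicliques. ``Extending by blocks of $8$'' is not automatic. Juxtaposing odd covers of two vertex-disjoint cliques costs an extra biclique for the edges between them. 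Avoiding that cost is exactly the parity bookkeeping that is missing, for instance via the criterion of Lemma~\ref{lem:pairs-criterion}.
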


In the same paper, Buchanan et al. formulated two conjectures concerning the value of $b_2(n)$, and the one for even $n$ was reformulated in \cite{BuchananCliftonCulverFranklNieOzekiRombachYin2026}. 

\begin{conjecture}[Buchanan et al.~\cite{BuchananCliftonCulverONeillRombachYin2023}]\label{conj:72}
Let $k\ge2$ be an integer. Then
$
        b_2(2k+1)=k+1.
$
\end{conjecture}

\begin{conjecture}[Buchanan et al.~\cite{BuchananCliftonCulverFranklNieOzekiRombachYin2026}]\label{conj:71}
For $k\ge2$ and $k\equiv2,3\pmod4$,
$b_2(2k)=k+1$.
    
\end{conjecture}

Very recently, Conjecture \ref{conj:72} was confirmed independently by Buchanan et al. \cite{BuchananCliftonCulverFranklNieOzekiRombachYin2026}, and Leader and Tan \cite{LeaderTan2026}.
\begin{theorem}[Buchanan et al.~\cite{BuchananCliftonCulverFranklNieOzekiRombachYin2026}; Leader and Tan~\cite{LeaderTan2026}]\label{thm:odd}
For every $k\ge 2$,
$
        b_2(2k+1)=k+1.
$

\end{theorem}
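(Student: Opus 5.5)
The lower bound $b_2(2k+1)\ge k+1$ is exactly the left-hand inequality of Theorem~\ref{thm:range}, since $\lceil (2k+1)/2\rceil=k+1$. So the whole content of the statement is the upper bound: for every $k\ge2$ I must exhibit an odd cover of $K_{2k+1}$ by $k+1$ bicliques. Theorem~\ref{thm:range} already gives this for $2k+1\equiv 1,7\pmod 8$, so only $n=2k+1\equiv 3,5\pmod 8$ remain.

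\emph{Reformulation.} I would encode an odd cover by $m$ bicliques $(A_i,B_i)$ as an assignment to each vertex $v$ of a pair $(x_v,y_v)\in\F_2^m\times\F_2^m$. Here $x_v(i)=1$ iff $v\in A_i$ and $y_v(i)=1$ iff $v\in B_i$, so $x_v$ and $y_v$ have disjoint supports. The cover condition becomes
\[
\omega(v,u):=x_u\cdot y_v+x_v\cdot y_u=1\qquad\text{for all } u\neq v .
\]
Thus I need $2k+1$ vectors in the symplectic space $\F_2^{2(k+1)}$ that pairwise pair to $1$ and satisfy the disjoint-support constraint. The Gram matrix $J-I$ of odd order $2k+1$ has $\F_2$-rank $2k$, so there is no rank obstruction. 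The constraint that actually bites is disjointness, which is what forces $b_2(2k+1)>k$.

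\emph{Strategy: induction with period $8$.} I would prove a gluing lemma of the form $b_2(n+8)\le b_2(n)+4$, valid when the cover of $K_n$ has a suitable special vertex. With that lemma, it suffices to construct by hand (or by a small computer search) covers for the base cases $n=5$ with $3$ bicliques (given by Figure~\ref{fig:oddcoverK5}) and $n=11$ with $6$ bicliques. For the gluing, I would fix an old vertex $z$ and delete it. I would then add $9$ new vertices with vectors $v_z+w_j$, where the $w_j$ are supported on $4$ fresh coordinate pairs and satisfy $\omega(w_i,w_j)=1$. Because $\omega(v_u,v_z)=1$ and $\omega(v_u,w_j)=0$, every old--new pair is automatically fine. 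Each new vertex's supports also stay disjoint from the old coordinates, since they coincide there with those of $z$.

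\emph{Main obstacle.} The difficulty is that $9$ vectors $w_j$ in $\F_2^8$ that pairwise pair to $1$ and have disjoint supports would give $b_2(9)\le4$, which is false. So the gadget must allow some $w_j$ with overlapping $x$- and $y$-supports in the new coordinates, and compensate by modifying $v_z$'s contribution or by reusing one old biclique. The step I expect to be hardest is designing this correction so that disjointness is restored while the pairings are preserved. My first attempt would be to require the base covers to contain a vertex $z$ lying in no $B_i$ (i.e.\ $y_z=0$). This would free room to place overlapping parts of the $w_j$ onto old coordinates where $z$ is absent. I would then verify that such a $z$ is reproduced in the new cover, so the induction closes.
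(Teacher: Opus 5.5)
The paper does not prove this statement; it quotes it from Buchanan et al.\ and Leader--Tan. Your proposal therefore has to stand on its own, and it has a genuine gap. The lower bound via Theorem~\ref{thm:range} is fine. The whole content, however, is an odd cover of $K_{2k+1}$ by $k+1$ bicliques for $2k+1\equiv 3,5\pmod 8$, and this cover is never produced. You reduce it to a gluing lemma $b_2(n+8)\le b_2(n)+4$ and to a base cover of $K_{11}$ by $6$ bicliques, and supply neither. Your own computation shows the gadget as written requires an odd cover of $K_9$ by $4$ bicliques, which does not exist.

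The proposed repair does not help. The hypothesis ``some vertex $z$ lies in no $B_i$'' is no restriction at all: replacing $(A_i,B_i)$ by $(B_i,A_i)$ whenever $z\in B_i$ leaves the edge sets unchanged and makes $y_z=0$. A lemma with that hypothesis would therefore apply to every cover, and it is false. For $K_2$ with its single biclique it would give $b_2(10)\le 5$, whereas $b_2(10)=6$. For $K_1$ with the empty cover it would give $b_2(9)\le 4$, contradicting Theorem~\ref{thm:range}. Whatever special structure makes a gluing step work must be substantive, and the proposal neither identifies it nor shows that it is inherited by the glued cover.

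The constraint on your ``borrow old coordinates'' idea is also sharper than the sketch suggests. Suppose you keep the old vertices' vectors and let a new vertex agree with $v_z+e$ on the old coordinates. Its pairing with an old vertex $u$ becomes $1+\omega(v_u,e)$, so $e$ must be orthogonal to every $v_u$ with $u\ne z$. For odd $n$ and $t=(n+1)/2$, these $n-1$ vectors have Gram matrix $J-I$ of even order, which squares to $I$ over $\F_2$. Hence they are independent and span a nondegenerate subspace of codimension $2$, so $e$ is confined to a fixed $2$-dimensional space. You would then need specific vectors $v_z+e$ in that coset to still have disjoint $x$- and $y$-supports, and that property to reappear in the new cover. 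This is exactly the explicit construction the cited papers supply and your proposal leaves open.

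Part of the range can be recovered cheaply. For $2k+1\equiv 3\pmod 8$ with $k\neq 5$, take a perfect odd cover of $K_{2k}$ from Theorem~\ref{thm:perfect} (whose proof does not use the present statement). Adding a vertex $z$ and the star $(\{z\},V(K_{2k}))$ gives $k+1$ bicliques. For $2k+1\equiv 5\pmod 8$ and for $n=11$ there is no such shortcut: neither $K_{2k}$ nor $K_{2k+2}$ has a perfect odd cover there, so a genuinely new construction is unavoidable.
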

Clearly, Theorem \ref{thm:odd} determines the value of $b_2(n)$ for all odd $n$. For $n=2k$ being even,
the difficulty lies in when does $K_{2k}$ have an odd cover with exactly $k$ complete bipartite graphs.  Such an optimal cover will be called a \dfn{perfect odd cover}.
Several cases for even $n$ were already known.  For example,  $b_2(n)=n/2$ is known when $n\equiv0\pmod 8$~\cite{BuchananCliftonCulverONeillRombachYin2023}, when $n\equiv18\pmod {24}$~\cite{BuchananCliftonCulverFranklNieOzekiRombachYin2026}, and when $n=3^s-1$~\cite{LeaderTan2026}.  
Buchanan et al.~\cite{BuchananCliftonCulverONeillRombachYin2023} found $
        b_2(10)=6$ by computer-aided calculations, this implies
$K_{10}$ has no perfect odd cover. 
  
\vskip 2mm
In this paper, our first main result is to establish a sufficient and necessary condition for $K_{2k}$ having a perfect odd cover as below, which also confirms Conjecture \ref{conj:71}.

\begin{theorem}\label{thm:perfect}
Let $k\ge1$. Then $K_{2k}$ has a perfect odd cover if and only if
$k\equiv0,1\pmod4$ and $k\ne5$.

\end{theorem}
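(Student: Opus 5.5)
Necessity and sufficiency are handled separately; necessity is an algebraic argument over $\F_2$.

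\emph{Necessity.} Let $(A_i,B_i)_{i=1}^k$ be a perfect odd cover of $K_{2k}$. Let $a_i,b_i\in\F_2^{2k}$ be the characteristic vectors of $A_i,B_i$. Over $\F_2$,
\[
J-I=\sum_{i=1}^k \bigl(a_ib_i^T+b_ia_i^T\bigr)=MNM^T,
\]
where $M=[a_1\,b_1\cdots a_k\,b_k]$ is $2k\times 2k$ and $N$ is the direct sum of $k$ copies of $\begin{pmatrix}0&1\\1&0\end{pmatrix}$. Since $J-I$ is invertible over $\F_2$ for $n$ even, $M$ is invertible. Hence the alternating form given by $J-I$ is congruent to the standard symplectic form. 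I would then extract invariants from this congruence.

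\begin{enumerate}
\item The diagonal of $a_ib_i^T+b_ia_i^T$ vanishes. This is consistent, so it gives no constraint.
\item Work with integer lifts. Count $\sum_i |A_i||B_i|$ modulo $4$, or count triangles, using $\binom{2k}{2}\equiv$ the number of edges covered an odd number of times. This gives relations between the $|A_i|$ and $|B_i|$.
\item The most promising invariant is a quadratic refinement. Define
\[
q(x)=\tfrac12\, x^T\tilde{(J-I)}x \pmod 2,
\]
i.e.\ $\binom{|x|}{2}\bmod 2$, on $\F_2^{2k}$. This is a quadratic form polarizing to $J-I$. Its Arf invariant depends on $k\bmod 4$, computed by counting vectors $x$ with $\binom{|x|}{2}$ even.
\end{enumerate}

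The cover yields another quadratic form with the same polarization, namely $\sum_i \langle a_i,x\rangle\langle b_i,x\rangle$ lifted from integer edge counts. The two forms differ by a linear functional, a square of the form $\langle c,x\rangle^2$, and this constrains the Arf invariant. Since $J-I$ is congruent to $MNM^T$, the two forms differ on each $x$ by counting edges covered twice, and this $c$ must be controlled. I expect this to force $\mathrm{Arf}=0$, i.e.\ $k\equiv 0,1\pmod 4$, via a direct computation of $\sum_x(-1)^{\binom{|x|}{2}}$ as a real part of $(1+i)^{2k}$-type sums. The operator \verb|\Rea| defined in the preamble suggests exactly this. The case $k=5$ is excluded by the computer result $b_2(10)=6$ quoted above.

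\emph{Sufficiency.} I would build constructions by induction. Known cases: $k\equiv 0\pmod 4$ (from $n\equiv0\pmod8$ in Theorem~\ref{thm:range}), $k=1$, and $k\equiv 9\pmod{12}$. The plan has three steps.
\begin{enumerate}
\item Prove a product or gluing lemma. From perfect odd covers of $K_{2k_1}$ and $K_{2k_2}$, together with suitable auxiliary structure, produce a perfect cover of $K_{2(k_1+k_2)}$ or $K_{2(k_1+k_2)+\text{const}}$. This generalizes the way $b_2(n)=\lceil n/2\rceil$ for $n\equiv 0,1,7\pmod 8$ was obtained.
\item Supply small base cases by hand: $k=4,8,9,12,13,17,\dots$.
\item Fill all residues $k\equiv 1\pmod 4$ with $k\ge 9$ by adding blocks of $4$.
\end{enumerate}

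The main obstacle is step~1. Adding $8$ vertices to a perfect cover costs $4$ bicliques only if the new vertices can be absorbed into existing bicliques so that the cross edges are covered oddly. I would first try taking a perfect cover of $K_8$ and a perfect cover of $K_{2k}$, then symmetrically adding each side of a new biclique to old parts. This mirrors the Graham–Pollak type recursion. Obtaining the $k=9$ base, i.e.\ $K_{18}$ with $9$ bicliques, is also needed, and it is covered by the $18\pmod{24}$ result.
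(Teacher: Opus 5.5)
Your necessity half is essentially the paper's argument, written on the vertex side; it needs two corrections. The paper's vectors $r_v$ are the rows of your $M$, its basis claim is your invertibility of $M$, and its identity $q(\sum_{v\in S}r_v)=\binom{|S|}{2}$ is your comparison of quadratic forms.

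\begin{itemize}
\item The forms $\binom{|x|}{2}$ and $Q(x)=\sum_i\langle a_i,x\rangle\langle b_i,x\rangle$ are \emph{equal}, not equal up to a linear term. $Q(x)$ counts, mod $2$, the pairs (edge inside $S$, biclique covering it), and this is $\binom{|S|}{2}$ because every edge is covered oddly. Equivalently, the difference vanishes on unit vectors since $A_i\cap B_i=\emptyset$. There is no ``edges covered twice'' correction and no $c$ to control.
\item The congruence of alternating forms carries no information by itself. What you use is $Q(x)=q_0(M^Tx)$ with $q_0(y)=\sum_i y_{2i-1}y_{2i}$. Invertibility of $M$ then gives $\sum_x(-1)^{Q(x)}=2^k$, while $\sum_x(-1)^{\binom{|x|}{2}}=2^k\Rea((1-i)i^k)$.
\end{itemize}
With these fixes, plus $b_2(10)=6$ for $k=5$, necessity is complete.

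The sufficiency half has a genuine gap. Outside $k=1$, $k\equiv0\pmod4$, $k\equiv9\pmod{12}$ and $k=13$, every $k\equiv1,5\pmod{12}$ is new; note $k=13$ is $26=3^3-1$, covered by Leader--Tan, not ``by hand''. Your plan rests on a gluing lemma you never supply, and the base cases you list, such as $k=17$, are not constructed either.

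The lemma also cannot be a black-box step. Any lemma turning an arbitrary perfect cover of $K_{2k}$ into one of $K_{2k+8}$ would, applied to $K_2$, give a perfect cover of $K_{10}$, contradicting $b_2(10)=6$.

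Here is the concrete failure. Take the natural direct sum with a perfect cover $(s_w)$ of $K_8$: old vertices get $(r_v,u)$ with $u=\sum_w s_w$, and new vertices get $(0,s_w)$. This satisfies every bilinear condition. However, it needs $u$, the vector of side parities $(|A_i|,|B_i|)\bmod 2$ of the $K_8$ cover, to avoid the pattern $(1,1)$ in every coordinate pair, so that all old vertices can sit on one side of each new biclique. The $K_2\to K_{10}$ argument shows that no perfect cover of $K_8$ has this property. So any auxiliary structure that makes an inductive step work must be a property of the larger cover that $K_2$ lacks. Finding it is the whole problem, and the proposal does not.

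The paper avoids induction altogether.
\begin{itemize}
\item It starts from the admissible block matrix $S_{3m}$ with $m\equiv3\pmod4$.
\item It deletes the first $d\equiv0\pmod4$ rows of the second row block and the first $d$ columns of the first column block.
\item Each retained row loses $0$ or $d$ nonzeros.
\item Each pair of retained rows loses an even number of same-sign columns and an even number of opposite-sign columns. In the mixed first/second-block case this uses that $d/2$ is even.
\item Hence admissibility survives.
\end{itemize}
Writing $k=3m-d$ with $d=0,4,8$ then gives all $k\equiv9,5,1\pmod{12}$ with $k\ge9$, $k\ne13$.
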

The proof of Theorem \ref{thm:perfect} has two new ingredients.  
 The first is a positive construction, see Subsection~\ref{deletion construction}.  Starting from the block construction of Buchanan et al.~\cite{BuchananCliftonCulverFranklNieOzekiRombachYin2026}  for $n\equiv18\pmod{24}$, we delete rows and columns from different blocks.  This non-symmetric deletion preserves exactly the parity conditions needed for a perfect odd cover and produces the missing cases $n\equiv10\pmod{24}$, except for $K_{10}$.
The second is a quadratic-form method, see Subsection~\ref{quadratic-form obstruction}.  It makes it impossible to construct any perfect odd cover of $K_{2k}$ when $k\equiv2,3\pmod4$, with no structural assumption on the cover.

From $
        b_2(10)=6$ and Theorems \ref{lem:blockknown}, \ref{thm:odd}, \ref{thm:perfect}, the value of $b_2(n)$ is completely determined for all $n$ as follows. 
\begin{theorem}\label{thm:mainb2}
For every $n\ge2$,
\[
 b_2(n)=
 \begin{cases}
 \lceil n/2\rceil, & n \hbox{ is odd},\\[1mm]
 n/2, & n\equiv 0\pmod 8,\\[1mm]
 n/2, & n\equiv 2\pmod 8\hbox{ and } n\ne 10,\\[1mm]
 n/2+1, & n\equiv 4,6\pmod 8,\\[1mm]
 6, & n=10.
 \end{cases}
\]
\end{theorem}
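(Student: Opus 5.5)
The plan is to obtain Theorem~\ref{thm:mainb2} as a bookkeeping corollary of the results already stated, splitting by the parity of $n$ and then by residue classes of $n=2k$ modulo $8$. No new combinatorial input is needed; the only care required is matching residues of $k$ modulo $4$ with residues of $n$ modulo $8$, and handling the small exceptional values.

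\emph{Odd $n$.} For $n=2k+1$ with $k\ge2$, Theorem~\ref{thm:odd} gives $b_2(n)=k+1=\lceil n/2\rceil$. For $n=3$ ($k=1$), Theorem~\ref{thm:range} gives $b_2(3)=2=\lceil 3/2\rceil$, since $\lceil 3/2\rceil=2$ and $3\equiv 3\pmod 8$ is not otherwise covered.

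\emph{Even $n=2k$.} By Theorem~\ref{thm:range}, $k\le b_2(2k)\le k+1$. Moreover, $b_2(2k)=k$ holds exactly when $K_{2k}$ has a perfect odd cover, which is the definition of that notion.
\begin{itemize}
\item If $n\equiv 0\pmod 8$, then $k\equiv 0\pmod 4$. Theorem~\ref{thm:perfect} (or directly Theorem~\ref{thm:range}) gives $b_2(n)=n/2$.
\item If $n\equiv 2\pmod 8$, then $k\equiv 1\pmod 4$. For $k\ne 5$, i.e.\ $n\ne 10$, Theorem~\ref{thm:perfect} gives a perfect odd cover, so $b_2(n)=n/2$. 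This includes $n=2$: here $b_2(2)=1$, realized by the single biclique $(\{u\},\{v\})$.
\item If $n=10$, the computer result $b_2(10)=6$ quoted from \cite{BuchananCliftonCulverONeillRombachYin2023} applies. It is also consistent with Theorem~\ref{thm:perfect}, since $k=5$ is excluded there and so $b_2(10)=k+1=6$.
\item If $n\equiv 4,6\pmod 8$, then $k\equiv 2,3\pmod 4$. Theorem~\ref{thm:perfect} says no perfect odd cover exists, so $b_2(n)>k$, and the upper bound of Theorem~\ref{thm:range} forces $b_2(n)=k+1=n/2+1$.
\end{itemize}

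There is no genuine obstacle at this level: all the difficulty sits in Theorem~\ref{thm:perfect}, namely the deletion construction for $n\equiv 10\pmod{24}$ and the quadratic-form obstruction for $k\equiv 2,3\pmod 4$, which I am taking as given. The only points needing attention are:
\begin{itemize}
\item checking that the residue translations $k \bmod 4 \leftrightarrow n \bmod 8$ cover every even $n\ge 2$ exactly once;
\item noting that the small cases $n=2,3$ fall within the stated ranges of the cited theorems or are immediate;
\item using that $b_2(2k)=k$ holds if and only if a perfect odd cover exists, which turns the dichotomy of Theorem~\ref{thm:perfect} into exact values.
\end{itemize}
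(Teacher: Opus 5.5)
Your proposal is correct and follows the paper's argument. The paper likewise reads Theorem~\ref{thm:mainb2} off from Theorem~\ref{thm:odd}, the bounds of Theorem~\ref{thm:range}, the dichotomy of Theorem~\ref{thm:perfect} (translating $k \bmod 4$ into $n \bmod 8$), and the computed value $b_2(10)=6$.

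One small slip: for $n=3$, Theorem~\ref{thm:range} gives only $2\le b_2(3)\le 3$, because $3\not\equiv 0,1,7\pmod 8$. The upper bound has to come instead from the explicit cover $(\{a\},\{b,c\}),(\{b\},\{c\})$ of $K_3$ (Graham--Pollak), a case the paper itself passes over without comment.
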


Now, let us turn to  the same problem for complete $r$-uniform hypergraphs for $r\ge 3$.  
Let $K_n^{(r)}$ denote the complete $r$-uniform hypergraph on $n$ vertices.  A \dfn{complete $r$-partite $r$-graph} is one whose vertex set consists of $r$ pairwise disjoint sets $A_1,...,A_r$ and edge set consists of those $ r $-sets that meet every $ A_i $. An odd cover of $K_n^{(r)}$ is a family of complete $r$-partite $r$-graphs such that every $r$-set of $K_n^{(r)}$ is covered an odd number of times.  Let $b_r(n)$ denote the minimum size of such a family.

For complete $3$-graphs, Leader and Tan proved the following result.

\begin{theorem}[Leader and Tan~\cite{LeaderTan2026}]\label{thm:LTb3}
For every even $n$, $b_3(n)=n/2$.  For every odd $n$, one has
\[
        b_3(n)\in \left\{ \frac{n-1}{2},\frac{n+1}{2}\right\}.
\]
Moreover, if $n$ is odd and either $n\equiv1\pmod8$ or $n$ is a power of $3$, then $b_3(n)=(n-1)/2$.
\end{theorem}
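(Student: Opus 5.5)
The plan is to sandwich $b_3(n)$ between a lower bound coming from links and an upper bound coming from explicit constructions. The lower bound would rest on Theorem~\ref{thm:range}; the upper bound would be built by hand.

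\emph{Lower bound via links.} Let $\mathcal H$ be an odd cover of $K_n^{(3)}$ and fix a vertex $v$. For each member $(A_1,A_2,A_3)$ of $\mathcal H$ containing $v$, say $v\in A_1$, I take the biclique $(A_2,A_3)$ on $V\setminus\{v\}$. Members of $\mathcal H$ avoiding $v$ are discarded. A pair $xy$ of vertices of $K_{n-1}=K_n\setminus v$ is covered by $(A_2,A_3)$ exactly when the triple $vxy$ is covered by $(A_1,A_2,A_3)$. Hence the link of $v$ is an odd cover of $K_{n-1}$ using at most $|\mathcal H|$ bicliques, and so
\[
b_3(n)\ge b_2(n-1)\ge \left\lceil \frac{n-1}{2}\right\rceil
\]
by Theorem~\ref{thm:range}. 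For even $n$ this gives $b_3(n)\ge n/2$, and for odd $n$ it gives $b_3(n)\ge (n-1)/2$. A refinement is possible: every member of $\mathcal H$ has three nonempty parts, so some vertex lies in every member of a minimum cover only if that is forced. I would not need this for the stated ranges.

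\emph{Upper bounds.} It suffices to show $b_3(n)\le\lceil n/2\rceil$ for all $n$, together with $b_3(n)\le (n-1)/2$ for odd $n$ with $n\equiv1\pmod 8$ or $n=3^s$. Deleting a vertex from every part preserves an odd cover, so $b_3$ is monotone in $n$. It therefore suffices to treat odd $n$ and then use $b_3(2m)\le b_3(2m+1)$.

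For the general bound I would try a recursive step $b_3(n+2)\le b_3(n)+1$. One cloning attempt is to place $u,w$ in the same parts as a vertex $x_0$ and then add one extra $3$-partite graph with parts $\{x_0\},\{u\},\{w\}\cup V'$ to repair triples meeting the clone class. However, this already leaves triples containing $w$ (those meeting the clone class in exactly $w$ and $x_0$, or in exactly $u$ and $w$) uncovered. So a single correcting member fails, and a more careful choice of $x_0$'s replacement is needed.

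For the sharp cases I would lift perfect-type constructions. I would take the bicliques of an optimal odd cover of $K_{n-1}$ from Theorem~\ref{thm:range} (valid for $n-1\equiv0\pmod 8$) and find, for each biclique $(A_i,B_i)$, a way to extend it to a $3$-partite graph with a third part $C_i$ so that, in $\mathbb F_2$ terms, the cubic form $\sum_i a_i b_i c_i$ equals $\sum_{x<y<z}x_iy_iz_i$-type elementary symmetric function $e_3$. For $n=3^s$ I would use an $\mathbb F_3$-vector-space construction in which the parts are indexed by lines through the origin.

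The main obstacle is the upper bound. The link argument is routine, but producing explicit families of $(n-1)/2$ or $\lceil n/2\rceil$ complete $3$-partite $3$-graphs with the right parity on \emph{all} triples, including those away from the distinguished vertex, requires a genuine algebraic identity for $e_3$ over $\mathbb F_2$. I would first attempt it by writing $e_3$ as a sum of products of three linear forms with disjoint supports.
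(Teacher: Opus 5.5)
\textbf{Verdict: there is a genuine gap.} Your lower bound is correct and is the same link argument the paper uses: $b_3(n)\ge b_2(n-1)\ge\lceil (n-1)/2\rceil$. The gap is the whole upper bound, and two of your reduction steps fail.

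First, monotonicity only pushes upper bounds down to smaller $n$. From $b_3(2m)\le b_3(2m+1)\le m+1$ you cannot reach the required $b_3(2m)\le m$. The reduction has to run the other way: prove $b_3(2m)\le m$, then deduce $b_3(2m-1)\le b_3(2m)\le m$.

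Second, the recursion $b_3(n+2)\le b_3(n)+1$ is false for general $n$. By Theorems~\ref{thm:b3identity} and~\ref{thm:mainb2}, $b_3(19)=b_2(18)=9$ while $b_3(21)=b_2(20)=11$. So no construction that adds one member to an arbitrary odd cover of $K_n^{(3)}$ to obtain one of $K_{n+2}^{(3)}$ can work for all $n$. In particular it fails for the odd $n$ you planned to apply it to, however the correcting member is chosen.

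Note also that the even case cannot be cheap. By your own link bound, $b_3(2k+2)\le k+1$ forces $b_2(2k+1)\le k+1$, which is Theorem~\ref{thm:odd}. Any proof therefore has to contain or invoke that result.

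The missing idea is the choice of the third part in your lifting, which you left open. Take any odd cover $(X_i,Y_i)_{i\le t}$ of $K_{n-1}$, add a vertex $z$, and set $Z_i=(V(K_{n-1})\setminus(X_i\cup Y_i))\cup\{z\}$. Then:
\begin{itemize}
\item A triple $\{a,b,z\}$ is covered by $(X_i,Y_i,Z_i)$ exactly when $ab$ is covered by $(X_i,Y_i)$.
\item A triple $S=\{a,b,c\}$ of old vertices is covered exactly when $|S\cap X_i|\,|S\cap Y_i|$ is odd. This quantity counts the pairs of $S$ crossed by $(X_i,Y_i)$; it equals $1$ when $S$ has one vertex in each of $X_i$, $Y_i$ and the rest, and is $0$ or $2$ otherwise.
\end{itemize}
Summing over $i$, each old triple has multiplicity $\equiv 1+1+1\equiv 1$.

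In your $\mathbb F_2$ language, write $\ell_X=\sum_{v\in X}x_v$ and let $e_j$ be the elementary symmetric functions of the old coordinates. Since $\ell^2=\ell$ as functions, $\ell_X\ell_Y(e_1+\ell_X+\ell_Y+x_z)=\ell_X\ell_Y(e_1+x_z)$. Using $\sum_i\ell_{X_i}\ell_{Y_i}=e_2$, the total is $(e_1+x_z)e_2=e_3+x_ze_2$, by the identity $e_1e_2=e_3$. No deeper algebraic identity is needed.

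This gives $b_3(n)\le b_2(n-1)$ for every $n$, which is the paper's proof of Theorem~\ref{thm:b3identity}. The statement then follows from known values of $b_2$:
\begin{itemize}
\item Theorem~\ref{thm:odd} handles even $n$.
\item Theorem~\ref{thm:range} handles odd $n$ in general, and gives the sharp bound when $n\equiv1\pmod 8$.
\item The Leader--Tan perfect odd cover of $K_{3^s-1}$ handles $n=3^s$.
\end{itemize}
Your sketch via lines in $\mathbb F_3^s$ is not yet a construction, and it is unnecessary on this route.
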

In the same paper, Leader and Tan wrote: ``It would be very interesting to determine what happens for other values of $n$.''  

\vskip 2mm
In this paper, our second main result is to determine the value of $b_3(n)$ for all $n$.

\begin{theorem}\label{thm:b3identity}
For every $n\ge2$,
$
        b_3(n+1)=b_2(n).
$
\end{theorem}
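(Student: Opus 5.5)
Since $b_2(n)$ and the bounds of Theorem~\ref{thm:LTb3} are both pinned down, we only need matching upper and lower bounds for $b_3(n+1)$ in terms of $b_2(n)$, and we get them by passing between bicliques of $K_n$ and $3$-partite $3$-graphs on $n+1$ vertices through a distinguished extra vertex. In both directions we work over $\F_2$: a family of complete $3$-partite $3$-graphs $(A_i,B_i,C_i)$ is an odd cover of $K_{n+1}^{(3)}$ exactly when $\sum_i (\mathbf 1_{A_i}\otimes\mathbf 1_{B_i}\otimes\mathbf 1_{C_i})_{\mathrm{sym}}$ equals the all-ones function on triples.

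\emph{Upper bound $b_3(n+1)\le b_2(n)$.} Take an odd cover of $K_n$ by bicliques $(X_i,Y_i)$ and add a new vertex $v$. A natural first attempt is the family $(X_i,Y_i,\{v\})$. It covers every triple through $v$ oddly, but triples inside $[n]$ are not covered at all. So instead we will use parts $(X_i,Y_i,Z_i\cup\{v\})$ with $Z_i=[n]\setminus(X_i\cup Y_i)$, or more generally some $Z_i\subseteq [n]\setminus(X_i\cup Y_i)$ to be chosen. The number of times a triple $\{a,b,c\}\subseteq[n]$ is covered then equals the number of $i$ in which $a,b,c$ fall into three distinct parts, and we need this to be odd. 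We plan to choose the $Z_i$ from the block structure of the optimal covers: the explicit constructions of Theorem~\ref{thm:range}, Theorem~\ref{thm:odd}, and Subsection~\ref{deletion construction}. Failing that, we will use an algebraic identity: the biclique cover gives $\sum_i (x_iy_i^T+y_ix_i^T)=J-I$ over $\F_2$, and we cube this suitably. As a consistency check, Theorem~\ref{thm:LTb3} for even $n+1$ already gives $b_3(n+1)=(n+1)/2=\lceil n/2\rceil$, which matches $b_2(n)$ for odd $n$. So the upper bound only needs real work for even $n$.

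\emph{Lower bound $b_3(n+1)\ge b_2(n)$.} Take an odd cover $\{(A_i,B_i,C_i)\}$ of $K_{n+1}^{(3)}$ and fix a vertex $v$. Its link is the family of bicliques on $[n]$ given by $(B_i,C_i)$ when $v\in A_i$, and similarly for the other two parts. This link is an odd cover of $K_n$, because every pair $\{a,b\}$ forms a triple with $v$. However, the link only uses those members containing $v$, so it gives $b_2(n)\le\#\{i: v\in A_i\cup B_i\cup C_i\}\le b_3(n+1)$. That is precisely the desired inequality.

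The lower bound is therefore immediate, and the whole difficulty lies in the upper bound construction for even $n$. The hard cases are those where $b_2(n)=n/2$, namely $n\equiv0,2\pmod 8$ with $n\neq10$, since there one must beat the value $b_3(n+1)=(n+2)/2$ allowed by Theorem~\ref{thm:LTb3}. When $b_2(n)=n/2+1$ the bound $(n+2)/2$ from Theorem~\ref{thm:LTb3} already suffices, and the case $n=10$ is covered the same way. For the hard cases, we will start from perfect odd covers of $K_n$ from Theorem~\ref{thm:perfect} and try to extend each biclique to a tripartite graph by assigning every vertex outside the biclique, together with $v$, to the third part. The parity of coverage of an internal triple then becomes a cubic condition on the cover matrix. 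We expect to verify this cubic condition through the quadratic-form description from Subsection~\ref{quadratic-form obstruction}, and we expect this verification to be the main obstacle.
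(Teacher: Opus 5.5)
Your lower bound via the link of a vertex is correct and is exactly the paper's argument. The gap is in the upper bound. You write down precisely the right construction, $(X_i,Y_i,Z_i\cup\{v\})$ with $Z_i=[n]\setminus(X_i\cup Y_i)$, but you never verify that it is an odd cover of $K_{n+1}^{(3)}$. Instead you treat the internal triples as a ``cubic condition'' to be checked later, only for perfect covers with $n\equiv 0,2\pmod 8$, and only by means you expect (the quadratic form or the block structure). As written, the one case your reduction leaves open is therefore not proved. The reduction itself, via Theorem~\ref{thm:LTb3} and the values of $b_2$, is logically sound but unnecessary.

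The missing idea is that the condition is linear in the pair parities, not cubic. Since $X_i,Y_i,Z_i$ partition $[n]$, count how many of the three pairs of a triple $\{a,b,c\}\subseteq[n]$ are separated by $(X_i,Y_i)$. This count is $1$ if $a,b,c$ lie in three distinct parts, and $0$ or $2$ otherwise (for instance, two vertices in $X_i$ and one in $Y_i$ give two crossing pairs). Writing $\chi_i(uv)=1$ when $uv$ is covered by $(X_i,Y_i)$, this means
\[
\mathbf{1}_{\{a,b,c\}\in(X_i,Y_i,Z_i\cup\{v\})}\equiv\chi_i(ab)+\chi_i(ac)+\chi_i(bc)\pmod 2 ,
\]
and summing over $i$ gives $1+1+1\equiv 1$. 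This holds for \emph{every} odd cover of $K_n$. So you need no case distinction and no appeal to Theorem~\ref{thm:LTb3}, perfect covers, or the quadratic form.

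Your ``more generally $Z_i\subseteq[n]\setminus(X_i\cup Y_i)$'' should be dropped: taking the full complement is essential. If some $c$ lay outside $X_i\cup Y_i\cup Z_i$, a triple with $a\in X_i$, $b\in Y_i$ would have one crossing pair yet not be covered, and the identity would fail.
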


Now apply Theorem \ref{thm:b3identity} with $n-1$ in place of $n$, and then by Theorem~\ref{thm:mainb2}, we get the value of $b_3(n)$ for all $n$.

For $r\ge 3$, the link of a vertex $v$ in an $r$-graph is the $(r-1)$-graph consisting of the $(r-1)$-sets obtained by removing $v$ from
the $r$-sets that contain it. Leader and Tan~\cite{LeaderTan2026} observed that
by taking the link of a vertex, it is not difficult to show that $ b_r(n) \geq b_{r-1}(n-1) $. Indeed, given an odd cover of $ K_n^{(r)} $, let $ v $ be a vertex, and for each complete $ r $-partite $ r $-graph that contains $ v $ in the cover, we form a complete $ (r-1) $-partite $ (r-1) $-graph by removing the class $ A_i $ containing $ v $. These complete $ (r-1) $-partite $ (r-1) $-graphs form an odd cover of $ K_n^{(r-1)} $, as required.
In fact, we can further show that $ b_r(n) = b_{r-1}(n-1) $ whenever $r\ge 3$ is odd.
Apart from the odd cover problem, the exact partition analogue for complete $r$-graphs has also been studied extensively; see, for example, Leader, Mili\'cevi\'c and Tan~\cite{LeaderMilicevicTan2018}.

\section{The pairs construction}
 In this section, we first present a useful pairs construction from the literature~\cite{RadhakrishnanSenVishwanathan2000}. 
This provides a standard way to produce perfect odd covers of even complete graphs.

Set 
$V(K_{2k})=\{a_1, b_1, a_2, b_2, \dots, a_k, b_k\}$.
Let  
$(X_1, Y_1), \dots, (X_k, Y_k)$ be $ k $ bicliques.
Let $S=(s_{ij})$ be a $k\times k$ matrix with entries in $\{0,1,-1\}$, where
\[s_{ij} =
\begin{cases} 
0, & \text{neither } a_i \text{ nor } b_i \text{ is in the } j\text{-th biclique} (X_j, Y_j), \\
1, & a_i \in X_j,\ b_i \in Y_j, \\
-1, & a_i \in Y_j,\ b_i \in X_j.
\end{cases}\]


The following lemma can be extracted from Buchanan et al.~\cite{BuchananCliftonCulverFranklNieOzekiRombachYin2026}, and the original necessity result was given by Radhakrishnan, Sen, and Vishwanathan~\cite{RadhakrishnanSenVishwanathan2000}.
For the sake of completeness, we include a proof here.

\begin{lemma}
\label{lem:pairs-criterion}
The above matrix $S$ gives a \emph{perfect odd cover} of $K_{2k}$ if and only if the following three parity conditions hold:
\begin{enumerate}
    \item\label{cond:row-odd} Each row must contain an odd number of $ \pm 1 $'s;
    \item\label{cond:cross-sign-odd} For two distinct rows, there are an odd number of columns where one has $1$ and the other has $-1$;
    \item\label{cond:cross-same-odd} For two distinct rows, there are an odd number of columns where both have 1 or both have $-1$.
\end{enumerate}
\end{lemma}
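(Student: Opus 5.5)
The proof is a direct edge-by-edge count: for each type of edge of $K_{2k}$, I would determine exactly which columns $j$ of $S$ correspond to bicliques $(X_j,Y_j)$ that cover it. Then the odd-cover requirement for that edge becomes one of the three parity conditions.

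The count rests on one observation about the encoding. In the $j$-th biclique, the pair $\{a_i,b_i\}$ either lies entirely outside $X_j\cup Y_j$ (when $s_{ij}=0$) or is split across the two sides. In the split case, $s_{ij}=1$ puts $a_i$ in $X_j$ and $b_i$ in $Y_j$, and $s_{ij}=-1$ reverses this. Two vertices are joined by an edge of the $j$-th biclique exactly when both lie in $X_j\cup Y_j$ and on opposite sides. Since there are $k$ bicliques, the cover is perfect precisely when it is an odd cover, so I only need to verify the parity of coverage of every edge. The edges of $K_{2k}$ fall into three types.

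\emph{Edges $a_ib_i$.} These are covered by biclique $j$ iff $s_{ij}\neq 0$, because both vertices are then present and on opposite sides. Hence $a_ib_i$ is covered an odd number of times iff row $i$ has an odd number of $\pm1$ entries, which is condition~\ref{cond:row-odd}.

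\emph{Edges $a_ia_l$ and $b_ib_l$ with $i\neq l$.} The vertices $a_i$ and $a_l$ lie on opposite sides of biclique $j$ iff $s_{ij},s_{lj}$ are both nonzero and $s_{ij}s_{lj}=-1$, that is, one entry is $1$ and the other is $-1$. The same criterion holds for $b_ib_l$, since swapping $a$ and $b$ in both pairs flips both signs. So both edges are covered oddly iff condition~\ref{cond:cross-sign-odd} holds for rows $i,l$.

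\emph{Edges $a_ib_l$ with $i\neq l$.} The vertices $a_i$ and $b_l$ lie on opposite sides iff $s_{ij}=s_{lj}\neq0$: if both entries are $1$, then $a_i\in X_j$ and $b_l\in Y_j$; if both are $-1$, then $a_i\in Y_j$ and $b_l\in X_j$. By symmetry the same criterion holds for $b_ia_l$. So these edges are covered oddly iff condition~\ref{cond:cross-same-odd} holds.

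These three types exhaust the edges of $K_{2k}$, and each equivalence holds in both directions, so both implications of the lemma follow at once. No step is genuinely hard. The only point needing care is getting the side assignments right in the mixed case $a_ib_l$, so that the "same sign" versus "opposite sign" conditions are not swapped.
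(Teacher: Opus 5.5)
Your proposal is correct and follows the paper's proof essentially step for step. Both arguments partition the edges into $a_ib_i$, $\{a_ia_l, b_ib_l\}$ and $\{a_ib_l, b_ia_l\}$, and read off the three parity conditions from when the endpoints lie on opposite sides of the $j$-th biclique.
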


\begin{proof}
Fix a row $i$. The edge $a_i b_i$ is covered in the $j$-th biclique if and only if $s_{ij} \neq 0$. Hence the number of times $a_i b_i$ is covered equals the number of nonzero entries in row $i$. For an odd cover this number must be odd, giving the condition \ref{cond:row-odd}.

Now take two distinct rows $i \neq l$. Consider four types of edges.

First, the edge $a_i a_l$ is covered in the $j$-th biclique if and only if $a_i$ and $a_l$ lie on opposite sides of the bipartition, i.e. one is in $X_j$ and the other in $Y_j$. By definition this is equivalent to $s_{ij}$ and $s_{lj}$ both being nonzero and having opposite signs, i.e. one is $1$ and the other is $-1$. Similarly, the edge $b_i b_l$ is covered exactly in the same columns of opposite signs. Therefore, for both $a_i a_l$ and $b_i b_l$ to be covered an odd number of times, the number of columns with nonzero entries of opposite signs must be odd. This gives the condition \ref{cond:cross-sign-odd}.

Second, the edge $a_i b_l$ or the edge $b_i a_l$ is covered in the $j$-th biclique if and only if $s_{ij}$ and $s_{lj}$ are both 1 or both $-1$. 
Hence, for both $a_i b_l$ and $b_i a_l$ to be covered an odd number of times, the number of columns where both have $1$ or both have $-1$ must be odd. This gives the condition \ref{cond:cross-same-odd}.

These edges comprise all edges of $K_{2k}$, so the three conditions are necessary and sufficient.
\end{proof}

\begin{definition}

A matrix $S=(s_{ij})\in\{0,1,-1\}^{k\times k}$ is called \dfn{admissible} if it satisfies the above three parity conditions.
\end{definition}

Therefore, by Lemma~\ref{lem:pairs-criterion}, if we want to construct a perfect odd cover of $K_{2k}$, we do not need to explicitly draw the $k$ bicliques on the $2k$ vertices; it suffices to construct a $k\times k$ admissible matrix. In other words, if there exists a $k\times k$ admissible matrix, then $K_{2k}$ has a perfect odd cover.

Buchanan et al.~\cite{BuchananCliftonCulverFranklNieOzekiRombachYin2026} gave the following $3m\times 3m$ matrix via the block matrix construction, and proved that this matrix is admissible whenever $m\equiv 3 \pmod 4$.

\begin{lemma}[Buchanan et al.~\cite{BuchananCliftonCulverFranklNieOzekiRombachYin2026}]\label{lem:blockknown}
Let
\[
        S_{3m}=
        \begin{pmatrix}
        A&C&O\\
        C&O&A\\
        O&A&C
        \end{pmatrix}.
\]
If $m\equiv3\pmod4$, then the block matrix is admissible.  Hence $K_{6m}$ has a perfect odd cover.
\end{lemma}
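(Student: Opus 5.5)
The plan is to verify the three parity conditions of Lemma~\ref{lem:pairs-criterion} through integer matrix identities, treating the $m\times m$ blocks $A$ and $C$ as the specific signed matrices of Buchanan et al.~\cite{BuchananCliftonCulverFranklNieOzekiRombachYin2026}. Their definitions are not reproduced above, so the plan is written to use only their row and inner-product statistics.

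\textbf{Step 1: reformulate admissibility over $\mathbb Z$.} For a $\{0,\pm1\}$-matrix $M$, write $|M|$ for the entrywise absolute value. For rows $u,v$ of $S$, let $\sigma$ be the number of columns where $u$ and $v$ have the same nonzero sign, and $\tau$ the number where they have opposite signs. Then
\[
\langle u,v\rangle=\sigma-\tau,\qquad \langle |u|,|v|\rangle=\sigma+\tau .
\]
Conditions 2 and 3 ask that $\sigma$ and $\tau$ both be odd. This is equivalent to
\[
\sigma+\tau\equiv 2 \pmod 4 \quad\text{and}\quad \sigma-\tau\equiv 0\pmod 4,
\]
or, more symmetrically, $\langle |u|,|v|\rangle\equiv 2$ and $\langle u,v\rangle\equiv 0 \pmod 4$. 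Condition 1 says that the diagonal of $|S||S|^{T}$ is odd. So admissibility of $S$ becomes the pair of congruences
\[
|S||S|^T\equiv D+2(J-I)\pmod 4,\qquad SS^T\equiv D' \pmod 4,
\]
read off the off-diagonal entries. Here $D,D'$ are diagonal, and $D$ (the diagonal of $|S||S|^T$) must have odd entries.

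\textbf{Step 2: compute the Gram matrices blockwise.} Because of the cyclic block pattern, $SS^T$ has diagonal blocks
\[
AA^T+CC^T
\]
in all three positions, and off-diagonal blocks of the form $AC^T$ or $CA^T$. For instance, block-row 1 against block-row 2 gives $AC^T+CO^T+OA^T=AC^T$. The same holds for $|S||S|^T$ with $|A|,|C|$ in place of $A,C$. The $3m\times3m$ verification therefore reduces to four $m\times m$ statements:

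(i) the diagonal of $|A||A|^T+|C||C|^T$ is odd;

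(ii) off the diagonal, $|A||A|^T+|C||C|^T\equiv 2J \pmod 4$;

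(iii) off the diagonal, $AA^T+CC^T\equiv 0 \pmod 4$;

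(iv) every entry, diagonal included, satisfies $|A||C|^T\equiv 2J$ and $AC^T\equiv 0 \pmod 4$.

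\textbf{Step 3: evaluate the block products.} Using the explicit circulant or sign structure of $A$ and $C$ from the cited construction, I would compute each product directly. Each entry is a count of supports or a signed count, and should come out as a simple expression such as $m-1$, $m-2$, $(m\pm1)/2$, or a constant. The hypothesis $m\equiv3\pmod4$ then enters by turning quantities like $m-1\equiv2$ and $m+1\equiv0 \pmod 4$ into exactly the residues required. Condition 1 follows from the row lengths being odd once $m$ is odd.

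\textbf{Main obstacle.} The hard part is (iii) and the $AC^T\equiv0 \pmod 4$ half of (iv). These are statements about signed inner products, which require tracking how the $\pm1$ pattern of $A$ interacts with that of $C$, not just the supports. I would first handle the diagonal entries of $AC^T$, where rows $i$ of $A$ and $C$ meet, since these are the most delicate. I would then treat the off-diagonal entries by a cyclic-shift argument, reducing every entry to one of a few cases indexed by $j-i \bmod m$.
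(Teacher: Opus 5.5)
\textbf{There is a genuine gap, in Step 1.} With $\sigma,\tau$ as you define them, $\langle |u|,|v|\rangle+\langle u,v\rangle=2\sigma$ and $\langle |u|,|v|\rangle-\langle u,v\rangle=2\tau$. So ``$\sigma,\tau$ both odd'' is equivalent to $\langle |u|,|v|\rangle\pm\langle u,v\rangle\equiv 2\pmod 4$, that is, to $(\langle |u|,|v|\rangle,\langle u,v\rangle)\equiv(2,0)$ \emph{or} $(0,2)\pmod 4$. Your reformulation keeps only the first pattern. It is therefore a strictly stronger sufficient condition, and it is false for $S_{3m}$.

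The definitions are in fact given right after the statement: $A$ is the all-ones matrix, and $C$ is the explicit skew-symmetric sign matrix. For two distinct rows in the same row block, $\langle|u|,|v|\rangle=m+(m-2)=2m-2\equiv 0\pmod 4$ when $m\equiv 3\pmod 4$. These pairs realize the pattern $(0,2)$, so your targets (ii) and (iii) are false. Already for $m=3$, the rows $(1,1,1,0,1,-1,0,0,0)$ and $(1,1,1,-1,0,1,0,0,0)$ of $S_9$ have $\sigma=3$, $\tau=1$, $\langle|u|,|v|\rangle=4$ and $\langle u,v\rangle=2$. So Step 3 cannot be completed as planned; this is not merely the hard part. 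Your (i) and (iv) are true, since $|A||C|^T=(m-1)J$ and $AC^T=O$.

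\textbf{Step 3 and the fix.} Step 3 is only a promise, and the ``main obstacle'' is misplaced. Since $A=J$ carries no sign pattern, $AC^T$ is just the vector of row sums of $C$. With the actual definitions, the verification reduces to two counts on $C$.

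For cross-block pairs, the only overlap is $J$ against a row of $C$. So each row of $C$ needs an odd number of $+1$'s and of $-1$'s. A direct count gives exactly $(m-1)/2$ of each, which is odd precisely when $m\equiv 3\pmod 4$; this, in (iv), is where the hypothesis enters.

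For rows $i<j$ in the same block, the $A$-part contributes $m$ same-sign columns. From the definition of $C$, $c_{il}c_{jl}=(-1)^{i+j}$ for $l<i$ or $l>j$, and $c_{il}c_{jl}=-(-1)^{i+j}$ for $i<l<j$. Hence the number $\tau_C$ of opposite-sign columns of $C$ is $j-i-1$ if $j-i$ is even, and $m-1-(j-i)$ if $j-i$ is odd. Both are odd because $m$ is odd. The same-sign count $m+(m-2-\tau_C)$ is then odd as well.

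Replace your (ii)--(iii) by the correct criterion $\langle|u|,|v|\rangle\pm\langle u,v\rangle\equiv 2\pmod 4$ and carry out these counts; that gives a complete proof. The paper itself quotes this lemma from Buchanan et al.\ without proof.
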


 Here $A$ is the $m\times m$ all-one matrix and $O$ is the $m\times m$ zero matrix.  Define an $m\times m$ matrix $C=(c_{ij})$ by $c_{ii}=0$ and, for $i<j$,
\[
        c_{ij}=\begin{cases}
        1, & j-i\hbox{ is odd},\\
        -1, & j-i\hbox{ is even},
        \end{cases}
        \qquad c_{ji}=-c_{ij}.
\]
For example, when $ m = 7 $,

\[
C = 
\begin{bmatrix}
0 & 1 & -1 & 1 & -1 & 1 & -1 \\
-1 & 0 & 1 & -1 & 1 & -1 & 1 \\
1 & -1 & 0 & 1 & -1 & 1 & -1 \\
-1 & 1 & -1 & 0 & 1 & -1 & 1 \\
1 & -1 & 1 & -1 & 0 & 1 & -1 \\
-1 & 1 & -1 & 1 & -1 & 0 & 1 \\
1 & -1 & 1 & -1 & 1 & -1 & 0
\end{bmatrix}.
\]

\section{Proof of Theorem~\ref{thm:perfect}}
\subsection{A parity-preserving deletion construction}\label{deletion construction}
In this subsection, we prove that $b_2(2k)=k$ for $k\equiv1\pmod4$, except for the known exceptional case $k=5$. It suffices to construct an admissible matrix. And our construction will start from $S_{3m}$ and perform row and column deletion operations as follows.

\begin{lemma}\label{lem:deletion}
Let $m\equiv3\pmod4$, and let $d$ be an integer such that
$
        d\equiv0\pmod4,~ 0\le d<m.
$
From the matrix $S_{3m}$ in Lemma~\ref{lem:blockknown}, delete the first $d$ rows of the second row block and the first $d$ columns of the first column block.  The resulting $(3m-d)\times(3m-d)$ matrix is admissible.
\end{lemma}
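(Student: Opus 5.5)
The plan is to reduce everything to the admissibility of $S_{3m}$ given by Lemma~\ref{lem:blockknown}, and to show that the deletion changes each parity count in Lemma~\ref{lem:pairs-criterion} by an even number.

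First, the row deletion is harmless. The three conditions are statements about single rows and about pairs of rows, each computed over all columns. Deleting rows therefore never affects whether the surviving rows satisfy them. So it suffices to delete only the first $d$ columns of the first column block and to check the conditions for the surviving rows: all rows of the first and third row blocks, and rows $i>d$ of the second row block. For each condition I will compare the count in the new matrix with the count in $S_{3m}$, which is odd by Lemma~\ref{lem:blockknown}. I then show that the contribution of the deleted columns $j=1,\dots,d$ is even. The deleted columns consist of the $A$-part for the first row block (all entries $1$), the $C$-part for the second row block (entries $c_{ij}$), and the $O$-part for the third row block (all entries $0$).

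\emph{Condition~\ref{cond:row-odd}.}
\begin{itemize}
\item A row of the first block loses $d$ entries equal to $1$.
\item A row $i>d$ of the second block loses $c_{i1},\dots,c_{id}$. These are all nonzero because $i\neq j$, so it loses $d$ nonzero entries.
\item A row of the third block loses only zeros.
\end{itemize}
Since $d$ is even, the parity is preserved in every case.

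\emph{Conditions~\ref{cond:cross-sign-odd} and~\ref{cond:cross-same-odd}.} Any pair involving a third-block row gains no contribution from the deleted columns, since that row is zero there. This leaves three types of pairs.
\begin{itemize}
\item Two first-block rows: both entries are $1$ in all $d$ deleted columns, so they contribute $d$ same-sign columns and $0$ opposite-sign columns. Both numbers are even.
\item A first-block row and a second-block row $i>d$: the deleted entries are $1$ and $c_{ij}$ with $j<i$. Here $c_{ij}=-c_{ji}$, which equals $-1$ exactly when $i-j$ is odd. As $j$ runs over $1,\dots,d$, the parity of $i-j$ alternates, so exactly $d/2$ columns are opposite-sign and $d/2$ are same-sign.
\item Two second-block rows $i,l>d$: for $j\le d$ we have $c_{ij}=c_{lj}$ if and only if $i-j\equiv l-j\pmod 2$, that is, if and only if $i\equiv l\pmod 2$. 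So the $d$ deleted columns are either all same-sign or all opposite-sign, giving counts $d$ and $0$ in some order. Both are even.
\end{itemize}

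The main point needing care is the mixed pair of a first-block row with a second-block row. This is exactly where the hypothesis $d\equiv 0\pmod 4$, rather than merely $d$ even, is used: it makes $d/2$ even. The remaining work is careful bookkeeping of the sign convention $c_{ji}=-c_{ij}$ together with the parity alternation of $c_{ij}$. The assumption $d<m$ guarantees that the second row block still has at least one surviving row and that all indices stay in range. Combining these cases with the admissibility of $S_{3m}$ from Lemma~\ref{lem:blockknown} shows that all three conditions of Lemma~\ref{lem:pairs-criterion} hold for the $(3m-d)\times(3m-d)$ matrix.
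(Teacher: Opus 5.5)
Your proposal is correct and takes essentially the same approach as the paper. Both arguments reduce to the admissibility of $S_{3m}$ and show that the $d$ deleted columns contribute an even amount to every count in Lemma~\ref{lem:pairs-criterion}, using the same case split: first--first rows give $d$ and $0$, first--second rows give $d/2$ and $d/2$, second--second rows give $d$ and $0$ in some order, and any pair with a third-block row gives $0$.
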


\begin{proof}
Let the resulting matrix be $T_{3m-d}$.  Since $S_{3m}$ is already admissible, it suffices to show that the deleted columns contribute an even number of same-sign nonzero columns and an even number of opposite-sign nonzero columns for any two retained rows. 
For convenience, denote the three blocks of rows by $R_1,R_2,R_3$ and the three blocks of columns by $C_1,C_2,C_3$ in $S_{3m}$. We delete the first $d$ columns of $C_1$ and the first $d$ rows of $R_2$ from $S_{3m}$. Consequently, all retained rows in $R_2$ have internal indices strictly greater than $d$.

First, we check each row of $T_{3m-d}$ still contain an odd number of nonzero entries.
A row from $R_1$ originally has $2m-1$ nonzero entries. The $d$ deleted columns all come from block $A$ (all one), leaving $2m - 1 - d$
nonzero entries. Since $d$ is even, $2m - 1 - d$ remains odd.
A retained row from $R_2$ originally has $2m-1$ nonzero entries. Its entries over the first $d$ deleted columns lie in block $C$; as the row index exceeds $d$, there are no diagonal zeros in these positions, so we remove exactly $d$ nonzero entries. The remaining count $2m-1-d$ is still odd.
Every row from $R_3$ has block $B$ (all zeros) in the first column block, so the deleted columns have no effect on its nonzero count, which stays at $2m-1$. Condition \ref{cond:row-odd} therefore holds.

Second, we analyze contributions from the deleted columns to pairs of rows.
We split into cases based on which row blocks the two retained rows belong to.

\text{Case 1: Both rows lie in $R_1$.}
Over the $d$ deleted columns, both rows take entry $1$. The same-sign contribution equals $d$, and the opposite-sign contribution equals $0$. Since $d\equiv 0\pmod{4}$, both values are even.

\text{Case 2: One row from $R_1$, one retained row from $R_2$.}
The $R_1$ row has entry $1$ across all deleted columns. Let the internal index of the $R_2$ row in block $C$ be $t>d$. For columns $1,2,\dots,d$ (all left of the diagonal), the definition of block $C$ states that the entry is $1$ if the column index shares parity with $t$, and $-1$ otherwise. Among the first $d$ integers, exactly $d/2$ share parity with $t$ and $d/2$ have opposite parity. Since $d\equiv 0\pmod{4}$, $d/2$ is even, so both the same-sign and opposite-sign contributions are even.

\text{Case 3: One row from $R_1$, one row from $R_3$.}
All entries of $R_3$ in the first column block lie in block $B$ (all zeros). No deleted column contains nonzero entries for both rows, so both same-sign and opposite-sign contributions equal $0$.

\text{Case 4: Both rows are retained rows from $R_2$.}
Let their internal indices be $p,q>d$. For any deleted column $c\le d$, we have $c<p,q$. The entry $C_{p,c}$ is determined solely by the parity of $p-c$, and $C_{q,c}$ solely by the parity of $q-c$. If $p,q$ have identical parity, the two rows carry identical signs across all $d$ deleted columns; if $p,q$ have opposite parity, their signs differ on every deleted column. The same-sign/opposite-sign contribution is thus either $d$ or $0$, both even.

\text{Case 5: Pairs involving $R_3$ where the second row is not from $R_1$.}
All entries of $R_3$ over deleted columns are zero, so all contributions remain $0$.

All cases confirm that the deleted columns supply an even number of same-sign nonzero columns and an even number of opposite-sign nonzero columns for any pair of retained rows. The original odd count properties from $S_{3m}$ are therefore preserved, so $T_{m,d}$ satisfies conditions \ref{cond:cross-sign-odd} and \ref{cond:cross-same-odd}. Hence $T_{3m-d}$ is admissible. 
\end{proof}

\begin{corollary}\label{cor:k1}
If $k\equiv 0,1\pmod4$ and $k\ne5$, then $K_{2k}$ has a perfect odd cover.
\end{corollary}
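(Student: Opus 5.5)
We need to show that every $k\equiv 0,1\pmod 4$ with $k\ne 5$ can be written as $k=3m-d$, where $m\equiv 3\pmod 4$, $d\equiv 0\pmod 4$ and $0\le d<m$, or else is covered by a known result. Lemma~\ref{lem:deletion} then gives a $k\times k$ admissible matrix, and Lemma~\ref{lem:pairs-criterion} turns it into a perfect odd cover of $K_{2k}$.

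The case $k\equiv 0\pmod 4$ is immediate. Here $2k\equiv 0\pmod 8$, so Theorem~\ref{thm:range} gives $b_2(2k)=k$, which is exactly a perfect odd cover. (Alternatively, $k=0$ in Theorem~\ref{thm:range}'s normalization is trivial; the case $k\ge 4$ is what matters.)

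For $k\equiv 1\pmod 4$, note that $3m\equiv 9\equiv 1\pmod 4$ whenever $m\equiv 3\pmod 4$. Hence $k=3m-d$ with $d\equiv 0\pmod 4$ is consistent modulo $4$, and it remains to choose $m$ so that $0\le d=3m-k<m$, i.e.\ $k/3\le m<k/2$ with $m\equiv 3\pmod 4$.

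To find such an $m$, write $k=12t+r$ with $r\in\{1,5,9\}$ and check each residue:
\begin{itemize}
\item $k=12t+9$: take $m=4t+3$, so $d=0$. This recovers Lemma~\ref{lem:blockknown} and covers $k=9$.
\item $k=12t+1$ with $t\ge 1$: take $m=4t+3$, so $d=8$. The condition $d<m$ requires $4t+3>8$, i.e.\ $t\ge 2$. For $t=1$, i.e.\ $k=13$, use $m=7$, $d=8$ instead; this fails since $8<7$ is false. So try $m=11$, $d=20$; this fails too. Hence $k=13$ needs checking separately.
\item $k=12t+5$: take $m=4t+3$, so $d=4$, which requires $t\ge 1$. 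This excludes exactly $k=5$, as it should.
\end{itemize}
The case $k=1$ is trivial: a single biclique covers $K_2$.

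The residue $12t+1$ is the main obstacle, so I redo the general search rather than fixing $m=4t+3$. The interval $[k/3,k/2)$ has length $k/6$, which exceeds $4$ once $k>24$. So for $k\ge 25$ some $m\equiv 3\pmod 4$ lies in it, and $d\equiv 0\pmod 4$ is then automatic. It remains to check the small values $k\in\{13,17,21\}$ by hand:
\begin{itemize}
\item $k=17$: $m=7$, $d=4$ works.
\item $k=21$: $m=7$, $d=0$ works.
\item $k=13$: we need $m\in[13/3,6.5)$, i.e.\ $m\in\{5,6\}$, and neither is $\equiv 3\pmod 4$.
\end{itemize}
So $k=13$ ($n=26$) is not reached by Lemma~\ref{lem:deletion}. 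For it I would appeal to an explicit $13\times 13$ admissible matrix, presumably given by the authors or by computer. First, though, I would check whether a variant deletion applies: for instance, deleting from $S_{3\cdot 7}$ rows and columns taken from other blocks, or deleting $d=8$ rows of $R_2$ and $8$ columns spread across $C_1$ and $C_3$. The aim is a case analysis mirroring the proof of Lemma~\ref{lem:deletion}, which needs every deleted-column contribution to be even.

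This $k=13$ case is where I expect the difficulty, since all other values follow from the interval-counting step.
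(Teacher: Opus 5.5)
Your reduction to Lemma~\ref{lem:deletion} is correct and is the same as the paper's. Your choices $m=4t+3$ with $d=0,4,8$ for $k\equiv 9,5,1\pmod{12}$ are exactly the paper's $m=k/3,\,(k+4)/3,\,(k+8)/3$. You also correctly find that this handles every $k\equiv1\pmod4$ except $k=1$, $k=5$ and $k=13$.

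The gap is $k=13$, which you leave open. You suggest an explicit $13\times13$ matrix ``presumably given by the authors or by computer,'' or an unspecified variant deletion; neither is a proof. The variant you name cannot even be set up. With $m=7$ the row block $R_2$ has only $7$ rows, so you cannot delete $8$ of them. Any deletion spread over several row blocks would mean redoing the parity bookkeeping of Lemma~\ref{lem:deletion}, which relied on all deleted columns lying in one block, on $d\equiv0\pmod4$, and on retained row indices exceeding $d$. Nothing guarantees the contributions would stay even.

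The missing observation is that $2k=26=3^3-1$. The introduction records that $b_2(n)=n/2$ for $n=3^s-1$, by the finite-field construction of Leader and Tan~\cite{LeaderTan2026}. Citing that result gives $b_2(26)=13$, i.e.\ a perfect odd cover of $K_{26}$, and this is exactly how the paper closes the case. No admissible-matrix form is needed, since a perfect odd cover is just an odd cover by $k$ bicliques. With that citation added, your argument is complete.
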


\begin{proof}
 If $k\equiv 0\pmod4$, it is known $b_2(2k)=k$ in~\cite{BuchananCliftonCulverONeillRombachYin2023} and then $K_{2k}$ has a perfect odd cover. We only consider the case for $k\equiv 1\pmod4$.

If $k=1$, then $K_2$ itself is one biclique.  If $k=13$, then $2k=26=3^3-1$, and then the result follows from the known
finite-field construction of Leader and Tan~\cite{LeaderTan2026}.

Now assume $k\ge9$, $k\ne13$, and $k\equiv1\pmod4$.  We shall write $k=3m-d$ with $m\equiv3\pmod4$, $d\equiv0\pmod4$, and $0\le d<m$.  
There are three cases modulo $12$.

If $k\equiv9\pmod{12}$, take
$m=k/3$ and $d=0$.

If $k\equiv5\pmod{12}$ and $k\ge17$, take
$m=(k+4)/3$ and $d=4$.

If $k\equiv1\pmod{12}$ and $k\ge25$, take
$m=(k+8)/3$ and $d=8$.

In each case $m\equiv3\pmod4$, $d\equiv0\pmod4$, $0\le d<m$, and $k=3m-d$.  The proof is complete.     
\end{proof}

\subsection{A quadratic-form method}\label{quadratic-form obstruction}
In this subsection we prove that a perfect odd cover cannot exist for half of the even
values.
\begin{theorem}\label{thm:obstruction}
If $K_{2k}$ has a perfect odd cover, then
$
        k\equiv0,1\pmod4.
$
Thus, if $k\equiv2,3\pmod4$, then $K_{2k}$ has no perfect odd cover.
\end{theorem}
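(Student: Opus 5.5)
The plan is to turn a perfect odd cover into an equality of $\mathbb{F}_2$-quadratic forms on $\F_2^{2k}$ and then compare Arf invariants. The Arf invariant of a hyperbolic form coming from $k$ bicliques is always $0$. The Arf invariant of the form naturally attached to $K_{2k}$ is $0$ exactly when $k\equiv 0,1\pmod 4$.

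\textbf{Step 1 (setup and nondegeneracy).} Let $(X_1,Y_1),\dots,(X_k,Y_k)$ be a perfect odd cover of $K_{2k}$. Let $x_i,y_i\in\{0,1\}^{2k}$ be the indicator vectors of $X_i$ and $Y_i$. Over $\mathbb Z$ put
\[N=\sum_{i=1}^k (x_iy_i^T+y_ix_i^T).\]
Its diagonal is zero because $X_i\cap Y_i=\emptyset$. Its off-diagonal entries $N_{ab}$ count how often the edge $ab$ is covered, so they are all odd. Hence $N\equiv J+I \pmod 2$. Since $2k$ is even, $(J+I)^2=2kJ+I=I$ over $\F_2$, so $J+I$ has rank $2k$. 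This rank is at most the rank of the $2k$ vectors $x_1,y_1,\dots,x_k,y_k$. Therefore these vectors form a basis of $\F_2^{2k}$, and the linear map $v\mapsto (x_1\cdot v,y_1\cdot v,\dots,x_k\cdot v,y_k\cdot v)$ is an isomorphism of $\F_2^{2k}$.

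\textbf{Step 2 (two expressions for one quadratic form).} Define $Q(v)=\tfrac12 v^TNv \bmod 2$ for $v\in\{0,1\}^{2k}$. This is well defined because the diagonal of $N$ is zero. We have
\[Q(v)=\sum_{a<b}N_{ab}v_av_b\equiv\sum_{a<b}v_av_b=\binom{|v|}{2}\pmod 2.\]
On the other hand, expanding $N$ gives $Q(v)\equiv\sum_{i=1}^k (x_i\cdot v)(y_i\cdot v)\pmod 2$. By Step 1 this is, after a linear change of coordinates, the standard hyperbolic form $u_1w_1+\dots+u_kw_k$. The hyperbolic form has Arf invariant $0$, i.e.
\[\sum_{v\in\F_2^{2k}}(-1)^{Q(v)}=+2^k.\]
I expect this to be the main point to get right: one must use the integer lift $N$, not only $J+I$ over $\F_2$. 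The bilinear form $J+I$ alone is alternating and carries no mod-$4$ information, and it is the quadratic refinement $Q$ that remembers it.

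\textbf{Step 3 (evaluating the character sum).} I will compute $\sum_v(-1)^{\binom{|v|}{2}}$ directly. The value $(-1)^{\binom{w}{2}}$ is $1,1,-1,-1$ for $w\equiv0,1,2,3\pmod 4$, which equals $\Rea(i^w)+\operatorname{Im}(i^w)$. Summing over $v$ with binomial weights gives
\[\sum_v(-1)^{\binom{|v|}{2}}=\Rea\bigl((1+i)^{2k}\bigr)+\operatorname{Im}\bigl((1+i)^{2k}\bigr).\]
Since $(1+i)^{2k}=2^ki^k$, this equals $2^k$ times $1,1,-1,-1$ according as $k\equiv0,1,2,3\pmod4$. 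Comparing with Step 2 forces the value to be $+2^k$, hence $k\equiv0,1\pmod4$. This proves Theorem~\ref{thm:obstruction}, and it uses no structural assumption on the cover.
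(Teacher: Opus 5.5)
Your proposal is correct and is essentially the paper's proof. The paper also evaluates the Gauss sum of the hyperbolic form $\sum_i x_{2i-1}x_{2i}$ once in standard coordinates, giving $2^k$, and once in the basis of vertex vectors $r_v$, giving $\sum_w\binom{2k}{w}(-1)^{\binom w2}=\pm 2^k$. Your version transposes the bookkeeping (biclique indicator vectors acting on $\F_2^{V}$ rather than vertex vectors in $\F_2^{2k}$), and your integer lift $N$ with zero diagonal and the identity $(J+I)^2=I$ play exactly the roles of the paper's $q(r_v)=0$ and its $B(\cdot,r_w)$ linear-independence argument.
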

\begin{proof}

Suppose, for a contradiction, that $K_{2k}$ has a perfect odd cover by $k$ bicliques
\[
        (L_1,R_1),(L_2,R_2),\ldots,(L_k,R_k).
\]
For a vertex $v$, define a vector
\[
 r_v=(\ell_1(v),r_1(v),\ell_2(v),r_2(v),\ldots,\ell_k(v),r_k(v))\in\F_2^{2k},
\]
where $\ell_i(v)=1$ if $v\in L_i$ and $0$ otherwise, and $r_i(v)=1$ if $v\in R_i$ and $0$ otherwise.  We work over $\F_2$, so all additions below are modulo $2$.

For $x=(x_1,x_2,x_3,x_4,\ldots,x_{2k-1},x_{2k})\in\F_2^{2k}$, define
\[
        q(x)=x_1x_2+x_3x_4+\cdots+x_{2k-1}x_{2k}=\sum_{i=1}^k x_{2i-1}x_{2i}.
\]
This is the quadratic form which checks, modulo $2$, how many coordinate pairs have both entries equal to $1$.  

For $x=(x_1,x_2,\ldots,x_{2k-1},x_{2k})\in\F_2^{2k}$ and $y=(y_1,y_2,\ldots,y_{2k-1},y_{2k})\in\F_2^{2k}$,
the associated bilinear form is
\[
        B(x,y)=q(x+y)+q(x)+q(y)
        =\sum_{i=1}^k(x_{2i-1}y_{2i}+x_{2i}y_{2i-1}).
\]
Since a vertex cannot lie in both sides of the same biclique, we deduce that $x_{2i-1}=0$ or $x_{2i}=0$ for all $1\leq i\leq 2k$.
It implies that every vertex vector satisfies
\begin{equation}\label{q(r)}
    q(r_v)=0.
\end{equation}
For two distinct vertices $u,v$, if the biclique $(L_i,R_i)$ covering the edge $uv$, then  
$x_{2i-1}y_{2i}+x_{2i}y_{2i-1}=1$. 
Hence the value $B(r_u,r_v)$ is exactly the parity of the number of bicliques covering the edge $uv$.  Because the cover is an odd cover, we have in $\F_2$ that
\begin{equation}\label{B(r,r)}
    B(r_u,r_v)=1 \qquad (u\ne v).              
\end{equation}
An easy observation shows that in $\F_2$, \begin{equation}\label{B(r,r)-equality}  B(r_v,r_v)=0.             \end{equation}

We claim that the $2k$ vectors $\{r_v: v\in V(K_{2k})\}$ form a basis of $\F_2^{2k}$.  Suppose
\[
        \sum_v \alpha_v r_v=0,
        \qquad \alpha_v\in\F_2.
\]
Fix a vertex $w$ and apply the linear functional $B(\cdot,r_w)$ to both sides.  Using (\ref{B(r,r)}) and (\ref{B(r,r)-equality}), we get
\[
        0=B\left(\sum_v \alpha_v r_v,r_w\right)=\sum_v\alpha_v B(r_v,r_w)=\sum_{v\ne w}\alpha_v.
\]
Let $T=\sum_v\alpha_v$.  Then the above equality says $T+\alpha_w=0$ in $\F_2$, hence $\alpha_w=T$.  This holds for every $w$.  Therefore
$
        T=\sum_v\alpha_v=2kT=0
$
in $\F_2$, since $2k$ is even.  Thus $T=0$ and all $\alpha_v=0$.  
It means that the vectors $r_v$ are linearly independent. 
Since there are $2k$ of them in a $2k$-dimensional space, they form a basis.

Now compute the Gauss sum
\[
        G(q)=\sum_{x\in\F_2^{2k}}(-1)^{q(x)}
\]
in two ways, where $G(q)\in \mathbb R$.

First use the original coordinate pairs. For any $x=(x_1,x_2,\ldots,x_{2k-1},x_{2k})$ of $\F_2^{2k}$
and take one pair of coordinates $(x_{2i-1},x_{2i})$, there are only four combinations: $(0,0),(0,1),(1,0),(1,1)$, then
\[
        \sum_{x_{2i-1},x_{2i}\in\F_2}(-1)^{x_{2i-1}x_{2i}}=1+1+1-1=2.
\]
There are $k$ independent coordinate pairs, so

\begin{align}\label{G(q)1}
     G(q)\!=\!\!\sum_{x\in\F_2^{2k}}(-1)^{q(x)}
     \!=\!\!\sum_{x\in\F_2^{2k}}\!\left(\prod_{i=1}^{k}(-1)^{x_{2i-1}x_{2i}}\right) 
     \!=\!\prod_{i=1}^{k} \left( \sum_{x_{2i-1},x_{2i}\in\F_2}\!(-1)^{x_{2i-1}x_{2i}} \right)
     \!=\!2^k.
\end{align}

Second use the basis $\{r_v: v\in V(K_{2k})\}$.  Every vector $x\in\F_2^{2k}$ can be written uniquely as $x=\sum_{v\in S}r_v$
for some subset $S$ of $V(K_{2k})$.  If $|S|=w$, then by (\ref{q(r)}) and (\ref{B(r,r)}),
\begin{align*}
    q(x)&=q\left(\sum_{v\in S}r_v \right)=\sum_{v\in S}q(r_v)+\sum_{\{u,v\}\subseteq S}B(r_u,r_v)=\sum_{\{u,v\}\subseteq S}B(r_u,r_v)=\binom{w}{2}\pmod2.
\end{align*}
Note that for each $w$, there are exactly $\binom{2k}{w}$ vectors in $\mathbb F_2^{2k}$. Hence
\begin{align*}\label{G(q)2}
     G(q)=\sum_{x\in\F_2^{2k}}(-1)^{q(x)} =\sum_{x\in\F_2^{2k}}(-1)^{\binom{w}{2}} =\sum_{w=0}^{2k}\binom{2k}{w}(-1)^{\binom w2}.
\end{align*}

We use $\Rea(z)$ to denote the real part of the complex number $z$.
Since the signs $(-1)^{\binom w2}$ repeat with period $4$ as $+,+,-,-,+,+,-,-,\ldots,$
it follows that $(-1)^{\binom w2}=\Rea((1-i)i^w)$.
Combining this with the binomial theorem, we obtain
\[
\begin{aligned}
\sum_{w=0}^{2k}\binom{2k}{w}(-1)^{\binom w2}
&=\Rea\left(\sum_{w=0}^{2k}\binom{2k}{w}(1-i)i^w\right)\\
&=\Rea\left((1-i)\sum_{w=0}^{2k}\binom{2k}{w}i^w\right)\\
&=\Rea\left((1-i)(1+i)^{2k}\right)\\
&=2^k\Rea\left((1-i)i^k\right).
\end{aligned}
\]
Therefore
\begin{equation}\label{G(q)3}
    G(q)=
        \begin{cases}
        2^k, & k\equiv0,1\pmod4,\\
        -2^k, & k\equiv2,3\pmod4.
        \end{cases} 
\end{equation}

Equations (\ref{G(q)1}) and (\ref{G(q)3}) are incompatible when $k\equiv2,3\pmod4$.  This contradiction proves the theorem.     
\end{proof}

\begin{proof}[Proof of Theorem \ref{thm:perfect}]
The conclusion follows from Corollary \ref{cor:k1} and Theorem \ref{thm:obstruction}.
\end{proof}


\section{Proof of Theorem \ref{thm:b3identity}}

We now prove $b_3(n+1)= b_2(n)$.
As mentioned in introduction, 
$ b_r(n) \geq b_{r-1}(n-1) $ for all $r\ge 3$. We only show that $b_3(n+1)\le b_2(n)$.



Let
$
        (X_1,Y_1),(X_2,Y_2),\ldots,(X_t,Y_t)
$
be an odd cover of $K_n$, where $t=b_2(n)$.  Add a new vertex $z$.  For every $i$, define
\[
        Z_i=(V(K_n)\less(X_i\cup Y_i))\cup\{z\}.
\]
Now form the complete $3$-partite $3$-graph with parts
$ (X_i, Y_i, Z_i)
$.
We claim that these $t$ complete $3$-partite $3$-graphs form an odd cover of $K_{n+1}^{(3)}$.

First consider a triple $\{a,b,z \}$.  It is covered by the $i$th $3$-partite graph if and only if the edge $ab$ is covered by the biclique $(X_i,Y_i)$.  Since the bicliques oddly cover $K_n$, the triple $\{a,b,z\}$ is covered oddly.

Now consider a triple $\{a,b,c\}\subseteq V(K_n)$. 
  For a fixed $i$, define
\[
\chi_i(uv) = 
\begin{cases} 
1, & \text{if } uv \text{ is covered by } (X_i, Y_i), \\
0, & \text{otherwise.}
\end{cases}
\]
The triple $\{a,b,c\}$ is covered by $(X_i,Y_i,Z_i)$ if and only if among $a,b,c$, exactly one lies in $X_i$, exactly one in $Y_i$, and exactly one in $Z_i$. This is equivalent to saying that the number of edges among $\{ab,ac,bc\}$
that are crossed by $(X_i,Y_i)$ is odd. In formulae,
\[
\mathbf{1}_{\{a,b,c\}\in (X_i,Y_i,Z_i)} \equiv \chi_i(ab)+\chi_i(ac)+\chi_i(bc) \pmod{2}.
\]
Summing over all $i$, we get
\[
\sum_i \mathbf{1}_{\{a,b,c\}\in (X_i,Y_i,Z_i)}
\equiv
\sum_i \chi_i(ab)+\sum_i \chi_i(ac)+\sum_i \chi_i(bc) \pmod{2}.
\]
Since the original biclique cover covers every edge an odd number of times, the right‐hand side equals
$
1+1+1 \equiv 1 \pmod{2}
$.
Hence the triple $\{a,b,c\}$ is also covered an odd number of times. That is every triple not containing $z$ is also covered oddly.  This proves $b_3(n+1)\le b_2(n)$.


\section*{Acknowledgement}
\noindent This research is supported by National Key R\&D Program of China under grant number 2024YFA1013900 and NSFC under grant number 12471327.
\section*{Declaration}
	
\noindent$\textbf{Conflict~of~interest}$
The authors declare that they have no known competing financial interests or personal relationships that could have appeared to influence the work reported in this paper.
\vskip 2mm	
\noindent$\textbf{Data~availability}$
No data was used for the research described in the article.

\end{document}